\theoremstyle{plain}
\newtheorem{theorem}{Theorem}
\newtheorem{lemma}[theorem]{Lemma}
\theoremstyle{definition}
\newtheorem{example}[theorem]{Example}
\theoremstyle{remark}
\newcommand{\F}{\mathbb{F}}
\newcommand{\Range}{{\rm Im}}
\author{Aleksandr Kodess\\
\small Department of Mathematics\\[-0.8ex]
\small University of Rhode Island\\[-0.8ex] 
\small Rhode Island, U.S.A.\\
\small\tt kodess@uri.edu\\
\and
Felix Lazebnik\thanks{Partially supported by NSF grant DMS-1106938-002}\\
\small Department of Mathematical Sciences\\[-0.8ex]
\small University of Delaware\\[-0.8ex]
\small Delaware, U.S.A.\\
\small\tt fellaz@udel.edu
}
\title{Connectivity of some algebraically defined digraphs}
\date{\dateline{Feb 20, 2015}{Aug 11, 2015}\\
\small Mathematics Subject Classifications: 05.60, 11T99}
\begin{document}
\maketitle

\medskip
\centerline{{\sl Dedicated to the memory of Vasyl Dmytrenko (1961-2013)}}
\bigskip

\begin{abstract}
Let $p$ be a prime, $e$ a positive integer, $q = p^e$, and let
$\F_q$ denote the finite field of $q$ elements.
Let $f_i\colon\F_q^2\to\F_q$ be arbitrary functions, where $1\le i\le l$, $i$ and $l$
are  integers. The digraph $D = D(q;\bf{f})$, where
${\bf f}=(f_1,\dotso,f_l)\colon\F_q^2\to\F_q^l$, is defined as follows.
The vertex set of $D$
is $\F_q^{l+1}$. There is an arc from a vertex ${\bf x} = (x_1,\dotso,x_{l+1})$ to a vertex
${\bf y} = (y_1,\dotso,y_{l+1})$ if
$
x_i + y_i = f_{i-1}(x_1,y_1)
$
for all $i$, $2\le i \le l+1$.
In this paper we study the strong connectivity of $D$ and completely describe its strong components.
The digraphs $D$
are directed analogues of some algebraically defined graphs,  which have been studied extensively
and have many applications.

  \bigskip\noindent \textbf{Keywords:} finite fields; directed graphs; strong connectivity
\end{abstract}

\section{Introduction and Results}
In this paper,
by a {\it directed graph} (or simply {\it digraph)}
$D$ we mean a pair $(V,A)$, where
$V=V(D)$ is the set of vertices and $A=A(D)\subseteq V\times V$ is the set of arcs.
The {\it order} of $D$ is the number of its vertices.
For an arc $(u,v)$, the first vertex $u$ is called its {\it tail} and the second
vertex $v$ is called its {\it head}; we denote such an arc by $u\to v$.
For an integer $k\ge 2$,  a {\it walk} $W$ {\it from} $x_1$ {\it to} $x_k$ in $D$ is an alternating sequence
$W = x_1 a_1 x_2 a_2 x_3\dots x_{k-1}a_{k-1}x_k$ of vertices $x_i\in V$ and arcs $a_j\in A$
such that the tail of $a_i$ is $x_i$ and the head of $a_i$ is $x_{i+1}$ for every
$i$, $1\le i\le k-1$.
Whenever the labels of the arcs of a walk are not important, we use the notation
$x_1\to x_2 \to \dotsb \to x_k$ for the walk.
In a digraph $D$, a vertex $y$ is {\it reachable} from a vertex $x$ if $D$ has a walk from $x$ to $y$. In
particular, a vertex is reachable from itself. A digraph $D$ is {\it strongly connected}
(or, just {\it strong}) if, for every pair $x,y$ of distinct vertices in $D$,
$y$ is reachable from $x$ and $x$ is reachable from $y$.
A {\it strong component} of a digraph $D$ is a maximal induced subdigraph of $D$ that is strong.
For all digraph terms not defined in this paper, see Bang-Jensen and Gutin \cite{Bang_Jensen_Gutin}.

Let $p$ be a prime, $e$ a positive integer, and $q = p^e$. Let
$\F_q$ denote the finite field of $q$ elements, and  $\F_q^*=\F_q\setminus\{0\}$.
We write $\F_q^n$ to denote the Cartesian product of $n$ copies of $\F_q$.
Let $f_i\colon\F_q^2\to\F_q$ be arbitrary functions, where $1\le i\le l$, $i$ and $l$
are positive integers. The digraph $D = D(q;f_1,\dotso,f_l)$, or just $D(q;\bf{f})$, where
${\bf f}=(f_1,\dotso,f_l)\colon\F_q^2\to\F_q^l$, is defined as follows.
(Throughout all of the paper the bold font is used to distinguish elements of
$\F_q^j$, $j\ge 2$, from those of $\F_q$, and we simplify the notation ${\bf f} ((x,y))$ and
${ f} ((x,y))$ to ${\bf f} (x,y)$ and
${ f} (x,y)$, respectively.)
The vertex set of $D$
is $\F_q^{l+1}$. There is an arc from a vertex ${\bf x} = (x_1,\dotso,x_{l+1})$ to a vertex
${\bf y} = (y_1,\dotso,y_{l+1})$ if and only if
\[
\label{incidence_condition}
x_i + y_i = f_{i-1}(x_1,y_1)\quad \mbox{for all }i,\ 2\le i \le l+1.
\]
We call the functions $f_i$, $1\le i\le l$,  the {\it defining functions} of $D(q;{\bf f})$.

If $l=1$ and ${\bf f}(x,y) = f_1(x,y) = x^m y^n$, $1\le m,n\le q-1$, we call  $D$ a {\it monomial} digraph, and denote it by  $D(q;m,n)$.

The digraphs $D(q; {\bf f})$
and $D(q;m,n)$ are directed analogues
of
some algebraically defined graphs,  which have been studied extensively
and have many applications. See
Lazebnik and Woldar \cite{LazWol01} and references therein; for some
subsequent work see  Viglione \cite{Viglione_thesis},
Lazebnik and Mubayi \cite{Lazebnik_Mubayi},
Lazebnik and Viglione \cite{Lazebnik_Viglione},
Lazebnik and Verstra\"ete \cite{Lazebnik_Verstraete},
Lazebnik and Thomason \cite{Lazebnik_Thomason},
 Dmytrenko, Lazebnik and Viglione \cite{DLV05},
 Dmytrenko, Lazebnik and Williford \cite{DLW07},
 Ustimenko \cite{Ust07}, Viglione \cite{VigDiam08},
 Terlep and Williford \cite{TerWil12}, Kronenthal \cite{Kron14},
 Cioab\u{a}, Lazebnik and Li \cite{CLL14},  and  Kodess \cite{Kod14}.

We note that $\F_q$ and $\F_q^l$ can be viewed as vector spaces over
$\F_p$ of dimensions $e$ and $el$, respectively. For $X\subseteq \F_q^l$, by $\langle X \rangle $
we denote the span of $X$ over $\F_p$, which is the set of all finite linear combinations of elements of $X$ with coefficients from $\F_p$.
For any vector subspace $W$ of $\F_q^l$, $\dim(W)$ denotes the dimension of $W$ over $\F_p$.
If $X \subseteq \F_q^l$, let ${\bf v} + X = \{ {\bf v} + {\bf x}\colon {\bf x}\in X\}$.
Finally,  let   $\Range({\bf f}) = \{(f_1(x,y),\dotso,f_l(x,y))\colon(x,y)\in\F_q^2\}$
denote the image of function ${\bf f}$.

In this paper we study strong connectivity of $D(q;{\bf f})$.
We mention that
by Lagrange's interpolation (see, for example, Lidl, Niederreiter \cite{Lidl_Niederreiter}),
each $f_i$ can be uniquely represented by
a bivariate polynomial of degree at most $q-1$ in each of the variables. We therefore also call
 functions $f_i$ {\it defining polynomials}.

In order to state our results,  we need the following notation.
For every ${\bf f}\colon \F_q^2\to\F_q^l$,
	we define
	\[
	{\bf g}(t) = {\bf f}(t,0) - {\bf f}(0,0),
	\quad
	{\bf h}(t) = {\bf f}(0,t) - {\bf f}(0,0),
 \]
\[
	\tilde{{\bf f}}(x,y) = {\bf f}(x,y) - {\bf g}(y) - {\bf h}(x),
	\]
\[
{\bf f_0}(x,y) = {\bf f}(x,y) - {\bf f}(0,0), \quad \text{and}
\]
\[
{\bf \tilde{f}_0}(x,y) = {\bf f_0}(x,y) - {\bf g}(y) - {\bf h}(x).
\]

As ${\bf g}(0) = {\bf h}(0) ={\bf 0}$,  one can view the coordinate function  $g_i$ of ${\bf g}$
(respectively, $h_i$ of ${\bf h}$), $i=1,\ldots, l$,  as the sum of all terms of
the polynomial $f_i$ containing only indeterminate $x$ (respectively, $y$),
and having zero constant term.
We, however, wish to emphasise that  
in the definition of
$\tilde{{\bf f}}(x,y)$, ${\bf g}$ is evaluated at $y$, and ${\bf h}$ at $x$.
Also, we will often write a vector $(v_1, v_2, \ldots, v_{l+1})\in \F_q^{l+1} = V(D)$
as an ordered pair $(v_1, {\bf v}) \in \F_q \times \F_q^l$,
where ${\bf v}= (v_2, \ldots, v_{l+1})$.
\bigskip

The main result of this paper is the following theorem, which gives necessary and sufficient
conditions for the strong connectivity of $D(q;{\bf f})$ and provides a description of
its strong components in terms of $\langle \Range({\bf \tilde{f}_0})\rangle$ over $\F_p$.

\begin{theorem}
	\label{main_key_theorem_on_D(q;f)}
Let $D= D(q;{\bf f})$,
$D_0= D(q;{\bf f_0})$, 
$W_0 = \langle \Range({\bf \tilde{f}_0})\rangle$ over $\F_p$, and
$d= \dim (W_0)$ over $\F_p$.
    Then the following statements hold.
	\begin{itemize}
		\item[(i)]
        If $q$ is odd, then the digraphs $D$ and $D_0$ are isomorphic.
		Furthermore, the vertex set of the strong component of $D_0$ containing a vertex $(u,{\bf v})$ is
		$$\label{thm18_part_i}
		\Bigl\{
		(a,{\bf v} + {\bf h}(a) - {\bf g}(u) + W_0)
		\colon
		a\in\F_q
		\Bigr\}
		\cup
		\Bigl\{
		(b,-{\bf v}+{\bf h}(b) + {\bf g}(u) + W_0)
        \, 
		\colon
		b\in\F_q
		\Bigr\} =$$
        \begin{equation}\label{cosets}
         \Bigl\{
		(a,{\bf \pm v} + {\bf h}(a) \mp {\bf g}(u) + W_0)\Bigl\}.
		\end{equation}

		The vertex set of the strong component of $D$ containing a vertex $(u,{\bf v})$ is
        \begin{equation}
        \label{cosets_D}
		\Bigl\{
		(a,{\bf v} + {\bf h}(a) - {\bf g}(u) +  W_0)
		\colon
		a\in\F_q
		\Bigr\}
		\cup
		\Bigl\{
		(b,-{\bf v}+{\bf h}(b) + {\bf g}(u) + {\bf f}(0,0) + W_0)
        \, 
		\colon
		b\in\F_q
		\Bigr\}.
        \end{equation}
		In particular, $D\cong D_0$ is strong if and only if
		$W_0 = \F_q^l$
		or, equivalently,
		$d = el$.

        If $q$ is even, then the strong component of $D$ containing a vertex
        $(u,{\bf v})$ is
        \begin{equation}
        \label{comp_descr_even}
        \Bigl\{
        (a, {\bf v} + {\bf h}(a) + {\bf g}(u) + W_0)\colon a\in\F_q
        \Bigr\}
        \cup
        \Bigl\{
        (a, {\bf v} + {\bf h}(a) + {\bf g}(u) + {\bf f}(0,0) + W_0)\colon a\in\F_q
        \Bigr\}
        \end{equation}
        \[
        =
        \Bigl\{
        (a, {\bf v} + {\bf h}(a) + {\bf g}(u) + W)\colon a\in\F_q
        \Bigr\},
        \]
        where $W = W_0 + \langle \{f(0,0)\}\rangle = \langle \Range({\bf \tilde{f}})\rangle$.
        \item[(ii)]
        If $q$ is odd, then $D\cong D_0$ has  $(p^{el-d}+1)/2$
		strong components. One of them is of order $p^{e+d}$. All
		other $(p^{el-d}-1)/2$ strong components are isomorphic, and each is of order $2p^{e+d}$.
        \smallbreak
        If $q$ is even, then the number of strong components in $D$ is
        $2^{el-d}$, provided ${\bf f}(0,0)\in W_0$, and 
        it is $2^{el-d-1}$ otherwise.
        In each case, all strong components are isomorphic,  and  are of orders 
        $2^{e+d}$ and $2^{e+d+1}$, respectively. 
%
%
	\end{itemize}
\end{theorem}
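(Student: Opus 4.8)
The plan is to reduce everything to a single \emph{walk-displacement formula} and then upgrade reachability to mutual reachability. Writing a vertex as $(u,{\bf v})\in\F_q\times\F_q^l$, an arc $(a,{\bf p})\to(a',{\bf p}')$ exists iff ${\bf p}+{\bf p}'={\bf f}(a,a')$, so a walk with first coordinates $a_0,\dots,a_k$ starting at $(a_0,{\bf v}_0)$ ends at $(a_k,{\bf v}_k)$ with
\[
{\bf v}_k=(-1)^k{\bf v}_0+\sum_{j=1}^{k}(-1)^{k-j}{\bf f}(a_{j-1},a_j).
\]
Since the first coordinate of the endpoint is unconstrained (one may always append one more arc to any prescribed first coordinate), the whole problem reduces to describing, for fixed $u,a$ and fixed parity of $k$, the set of achievable displacements $S_k=\sum_j(-1)^{k-j}{\bf f_0}(a_{j-1},a_j)$ in $D_0$. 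First I would record the key identities $\tilde{{\bf f}}_0(t,0)={\bf g}(t)-{\bf h}(t)$ and ${\bf f_0}(x,y)=\tilde{{\bf f}}_0(x,y)+{\bf h}(y)+{\bf g}(x)$ (here one must keep straight the swapped evaluation in the definition of $\tilde{{\bf f}}_0$); these are the only facts needed to track cosets of $W_0$.

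For the \emph{containment} I would induct on $k$ via the recursion $S_{k+1}={\bf f_0}(a_k,a_{k+1})-S_k$, showing that every even walk $u\to a$ has $S_k\in{\bf h}(a)-{\bf g}(u)+W_0$ and every odd walk has $S_k\in{\bf h}(a)+{\bf g}(u)+W_0$. At each step the discrepancy collapses to a sum of two values of $\tilde{{\bf f}}_0$, hence lies in $W_0$, so the coset is preserved. For \emph{surjectivity} I would exhibit the closed even walk with first coordinates $0\to x\to y\to 0\to 0$, whose displacement is exactly $\tilde{{\bf f}}_0(x,y)$ (using ${\bf f_0}(0,0)={\bf 0}$); inserting such a detour at an intermediate appearance of the first coordinate $0$ changes $S_k$ by $\pm\tilde{{\bf f}}_0(x,y)$ without altering the endpoints or the parity of $k$. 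Because $W_0$ is an $\F_p$-subspace (so $W_0=-W_0$, and concatenating even closed walks adds displacements), routing through $0$ and inserting these detours realizes every element of $W_0$. Thus the set reachable from $(u,{\bf v})$ is exactly the union over $a$ of $\{a\}\times({\bf v}+{\bf h}(a)-{\bf g}(u)+W_0)$ (even walks) together with the union over $b$ of $\{b\}\times(-{\bf v}+{\bf h}(b)+{\bf g}(u)+W_0)$ (odd walks).

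Next I would prove \emph{mutual} reachability: checking that from any vertex of either family one can reach $(u,{\bf v})$ back reduces, again, to the membership ${\bf h}(t)-{\bf g}(t)=-\tilde{{\bf f}}_0(t,0)\in W_0$. This identifies the displayed union as the full strong component of $D_0$, and gives strong connectivity iff $W_0=\F_q^l$. For odd $q$ the isomorphism $D\cong D_0$ comes from the vertex map $(u,{\bf v})\mapsto(u,{\bf v}-\tfrac12{\bf f}(0,0))$, which converts the defining function ${\bf f}$ into ${\bf f_0}$; transporting the component of $D_0$ through this map yields the asymmetric formula~\eqref{cosets_D}, the extra ${\bf f}(0,0)$ appearing only in the odd (sign-reversed) family. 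For even $q$ the sign $(-1)^k$ is trivial, so parity no longer splits the ${\bf v}$-part but only toggles an added ${\bf f}(0,0)$ (since $S_k=k\,{\bf f}(0,0)+\sum{\bf f_0}$ and $2{\bf f}(0,0)={\bf 0}$); the same displacement analysis then gives one coset family modulo $W=W_0+\langle{\bf f}(0,0)\rangle=\langle\Range(\tilde{{\bf f}})\rangle$, which is~\eqref{comp_descr_even}.

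For part (ii) I would count by parametrizing components. In the odd case a component corresponds to an unordered pair $\{{\bf s},-{\bf s}\}$ in $\F_q^l/W_0$ (even family $\leftrightarrow{\bf s}$, odd family $\leftrightarrow-{\bf s}$); the two families coincide iff $2{\bf s}\in W_0$, i.e.\ ${\bf s}\in W_0$ since $q$ is odd. This gives a single self-paired component of order $p^{e+d}$ and $(p^{el-d}-1)/2$ paired components of order $2p^{e+d}$, and a direct dimension count confirms the total $p^{e(l+1)}$. All paired components are isomorphic via the explicit map adding ${\boldsymbol\delta}$ on the even family and subtracting it on the odd family, which one verifies preserves every arc; for even $q$ the translations $(u,{\bf v})\mapsto(u,{\bf v}+{\bf c})$ are genuine automorphisms of $D$ and act transitively on components, and the order $2^{e+d}$ versus $2^{e+d+1}$ is decided by whether $\dim W=d$ or $d+1$, equivalently whether ${\bf f}(0,0)\in W_0$. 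I expect the two main obstacles to be the sign/parity bookkeeping in the displacement formula (correctly separating the even and odd families and seeing how mutual reachability glues them, odd $q$, versus how parity merely shifts by ${\bf f}(0,0)$, even $q$), and the ``all components isomorphic'' step in odd characteristic, where translations fail to be automorphisms and one must instead produce the $\pm{\boldsymbol\delta}$ isomorphism between components directly.
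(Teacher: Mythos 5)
Your proposal is correct and follows essentially the same route as the paper's own proof: the alternating-sign walk-displacement formula, the identities showing ${\bf g}(t)-{\bf h}(t)\in W_0$ and ${\bf f}_0(x,y)\equiv {\bf g}(x)+{\bf h}(y) \pmod{W_0}$, detour walks routed through first coordinate $0$ realizing all of $W_0$, the shift isomorphism $(u,{\bf v})\mapsto (u,{\bf v}-\tfrac12 {\bf f}(0,0))$ for odd $q$, the same coset count, and the same ``add a fixed vector on the even family, subtract it on the odd family'' isomorphism between components. The only notable (and harmless) deviations are your translation-automorphism shortcut for $q$ even, which is cleaner than the paper's ``absolutely similar'' argument, and a slip in your second recorded identity, whose exact form is ${\bf f}_0(x,y)=\tilde{\bf f}_0(x,y)+{\bf g}(y)+{\bf h}(x)$ (with the swap); it holds only modulo $W_0$ as you wrote it, which is all your argument uses.
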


We note here that for $q$ even the digraphs $D$ and $D_0$
are generally not isomorphic.

We apply this theorem to monomial digraphs $D(q;m,n)$. For these digraphs we can restate the connectivity results more explicitly.

\begin{theorem}
	\label{monomial_strong}
    Let $D=D(q; m,n)$ and let $d= (q-1, m, n)$ be the greatest common divisor of $q-1$,  $m$ and $n$.
    For each positive divisor $e_i$ of $e$, let  $q_i
	:= (q-1)/(p^{e_i} - 1)$, and let $q_s$ be the largest of the $q_i$ that divides $d$.
	Then the following statements hold.
	\begin{itemize}
		\item[(i)] The vertex set of the strong component of $D$ containing a vertex $(u,v)$ is
		\begin{align}
		\label{comp_vertex_set}
		\{(x, v + \F_{p^{e_s}})\colon \, x\in\F_q\}\cup
		\{(x,-v + \F_{p^{e_s}})\colon \, x\in\F_q\}.
		\end{align}
		In particular, $D$ is strong if and only if $q_s = 1$ or, equivalently, $e_s=e$.
		\item[(ii)] 
        If $q$ is odd, then $D$ has  $(p^{e-e_s}+1)/2$
		strong components. One of them
is of order $p^{e+e_s}$. All
		other $(p^{e-e_s}-1)/2$ strong components are all isomorphic and each is of order $2p^{e+e_s}$.
        \smallbreak
If $q$ is even, then $D$ has $2^{e-e_s}$ strong components,
        all isomorphic, and each is of order $2^{e+e_s}$.

	\end{itemize}
\end{theorem}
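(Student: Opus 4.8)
The plan is to specialize Theorem~\ref{main_key_theorem_on_D(q;f)} to the monomial case $l=1$, $f(x,y)=x^my^n$ with $1\le m,n\le q-1$, so that the whole task reduces to evaluating the auxiliary data $\mathbf g$, $\mathbf h$, $\mathbf f(0,0)$, and, crucially, the subspace $W_0=\langle\Range(\tilde{\mathbf f}_0)\rangle$ over $\F_p$. Since $m,n\ge 1$, one immediately gets $f(0,0)=0^m0^n=0$, $g(t)=f(t,0)-f(0,0)=t^m\cdot 0^n=0$, and $h(t)=f(0,t)-f(0,0)=0^m\cdot t^n=0$. Hence $\tilde f_0(x,y)=f_0(x,y)=f(x,y)=x^my^n$, and $W_0=\langle\{x^my^n:x,y\in\F_q\}\rangle$ over $\F_p$. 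Once $W_0$ is identified as a subfield $\F_{p^{e_s}}$, both parts will follow by substitution into Theorem~\ref{main_key_theorem_on_D(q;f)}, using $l=1$ (so $el=e$) and $\dim(W_0)=e_s$, with $g=h=0$ and $f(0,0)=0\in W_0$ collapsing the descriptions to the stated forms.

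The heart of the argument is the computation $W_0=\F_{p^{e_s}}$. First I would determine $\Range(f)$ as a set. Writing $\F_q^*=\langle\gamma\rangle$, the image of $x\mapsto x^m$ is the cyclic subgroup $\langle\gamma^{(q-1,m)}\rangle$ and that of $y\mapsto y^n$ is $\langle\gamma^{(q-1,n)}\rangle$; as these are subgroups of the abelian group $\F_q^*$, their product set is the subgroup they generate, namely $\langle\gamma^{(q-1,m,n)}\rangle=\langle\gamma^{d}\rangle$, the group $S$ of nonzero $d$-th powers, where $d=(q-1,m,n)$. Thus $\Range(f)=\{0\}\cup S$ and $W_0=\langle S\rangle$ over $\F_p$.

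Next I would show that for any multiplicative subgroup $S\le\F_q^*$ the span $\langle S\rangle$ over $\F_p$ is the smallest subfield of $\F_q$ containing $S$. Closure under $\F_p$-scalars and addition is automatic, and closure under multiplication holds because a product of two $\F_p$-linear combinations of elements of $S$ is again such a combination ($S$ being multiplicatively closed). Since $1\in S$ and $\langle S\rangle$ is a finite subring of a field containing $1$, it is a subfield $\F_{p^k}$ with $k\mid e$, and it is the smallest subfield containing $S$. Such an $\F_{p^k}$ contains $S=\langle\gamma^d\rangle$ iff $|S|=(q-1)/d$ divides $p^k-1$; using $(p^k-1)\mid(q-1)$, this is equivalent to $q_k:=(q-1)/(p^k-1)$ dividing $d$. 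As $k$ grows over divisors of $e$ the quotient $q_k$ decreases, so the smallest admissible $k$ corresponds to the \emph{largest} $q_k$ dividing $d$, which is exactly $q_s$, with associated exponent $e_s$. Therefore $W_0=\F_{p^{e_s}}$ and $\dim(W_0)=e_s$.

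Finally I would substitute $W_0=\F_{p^{e_s}}$, $\dim(W_0)=e_s$, $el=e$, $g=h=0$, $f(0,0)=0$ into Theorem~\ref{main_key_theorem_on_D(q;f)}. For part (i), the odd-$q$ component set reduces to $\{(x,v+\F_{p^{e_s}})\}\cup\{(x,-v+\F_{p^{e_s}})\}$, and the even-$q$ description reduces to the same set (the two cosets merging since $-v=v$ and $W=W_0$ there), giving \eqref{comp_vertex_set}; strongness becomes $W_0=\F_q$, i.e.\ $e_s=e$, i.e.\ $q_s=1$. For part (ii) the counts and orders come directly from Theorem~\ref{main_key_theorem_on_D(q;f)}(ii) under this substitution, where for even $q$ we are always in the case $\mathbf f(0,0)=0\in W_0$. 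The main obstacle is the clean identification of $\langle S\rangle$ with $\F_{p^{e_s}}$ and the bookkeeping that translates ``smallest subfield containing $S$'' into the ``largest $q_i$ dividing $d$'' formulation; everything after that is substitution.
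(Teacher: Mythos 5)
Your proof is correct, and its top-level reduction is the same as the paper's: specialize Theorem~\ref{main_key_theorem_on_D(q;f)} with $g=h=0$, ${\bf f}(0,0)=0$, identify $W_0=\langle\Range(x^my^n)\rangle$ with the subfield $\F_{p^{e_s}}$, and substitute (including the correct observations that for even $q$ one is always in the case ${\bf f}(0,0)=0\in W_0$, and that $-v=v$ merges the two cosets there). Where you genuinely diverge is in how the two key facts are proved, which the paper isolates as Lemma~\ref{lemma_size_Amn} and Lemma~\ref{smallest_field_Feps}. For the fact that $\{x^my^n\colon x,y\in\F_q^*\}$ is exactly the group $A_d$ of nonzero $d$-th powers, the paper counts orders, $|A_mA_n|=|A_m||A_n|/|A_m\cap A_n|$ with $A_m\cap A_n$ expressed through the least common multiple $[\overline m,\overline n]$, followed by several lines of gcd/lcm arithmetic; you instead note that the product of two subgroups of the abelian group $\F_q^*$ is the subgroup they generate, namely $\langle\gamma^{\overline m},\gamma^{\overline n}\rangle=\langle\gamma^{(\overline m,\overline n)}\rangle=\langle\gamma^{d}\rangle$, which is shorter and avoids the arithmetic entirely. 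For the identification $\langle A_d\rangle=\F_{p^{e_s}}$, the paper proves containment $A_d\subseteq\F_{p^{e_s}}$ constructively via the norm map (every $d$-th power is a power of $x^{(p^e-1)/(p^{e_s}-1)}$) and gets minimality by a Lagrange-type contradiction; you characterize containment $S\subseteq\F_{p^k}$ by the divisibility $q_k\mid d$ and use the strict monotonicity of $k\mapsto q_k$ to pick out $e_s$. Both are sound, and your route has the additional merit of making explicit a step the paper leaves implicit: that the $\F_p$-span of a multiplicative subgroup of $\F_q$ is closed under multiplication, hence is a subfield, which is precisely what justifies passing from ``$\F_{p^{e_s}}$ is the smallest subfield containing $A_d$'' to ``$\langle A_d\rangle=\F_{p^{e_s}}$.''
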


\bigskip

Our proof of Theorem \ref{main_key_theorem_on_D(q;f)} is presented
in  Section \ref{sect2},  and the proof of Theorem \ref{monomial_strong} is in
Section \ref{sect3}. In Section \ref{sec4} we suggest two areas for further investigation.

\section{Connectivity of $D(q;{\bf f})$} \label{sect2}

Theorem \ref{main_key_theorem_on_D(q;f)} and our proof below were inspired by the ideas
from \cite{Viglione_thesis}, where the components of similarly defined
bipartite simple graphs were described.


\bigskip

We now prove Theorem \ref{main_key_theorem_on_D(q;f)}.
\begin{proof}
Let $q$ be odd.
We first show that $D \cong D_0$.
The map
$\phi\colon V(D)\to V(D_0)$ given by
\begin{equation}
\label{isomor_phi}
(x,{\bf y})\mapsto (x,{\bf y}-\frac{1}{2}{\bf f}(0,0))
\end{equation}
is clearly a bijection.
We check that $\phi$ preserves adjacency.
Assume that $((x_1,{\bf x}_2),(y_1,{\bf y}_2))$ is an arc in $D$, that is,
${\bf x}_2 + {\bf y}_2 = {\bf f}(x_1,y_1)$.
Then, since $\phi((x_1,{\bf x_2})) = (x_1,{\bf x}_2 - \frac{1}{2}{\bf f}(0,0))$ and
$\phi((y_1,{\bf y_2})) = (y_1,{\bf y}_2 - \frac{1}{2}{\bf f}(0,0))$, we have
\[
({\bf x}_2 - \frac{1}{2}{\bf f}(0,0))
+
({\bf y}_2 - \frac{1}{2}{\bf f}(0,0))
=
{\bf f}(x_1,y_1) - {\bf f}(0,0)
=
{\bf f}_0(x_1,y_1),
\]
and so $(\phi((x_1,{\bf x}_2)),\phi((y_1,{\bf y}_2)))$ is an arc in $D_0$.
As the above steps are reversible, $\phi$ preserves non-adjacency
as well.
Thus, $D(q;{\bf f}) \cong D(q;{\bf f_0})$.
\bigskip

We now obtain the description (\ref{cosets})
of the strong components of
$D_0$, and then explain how
the description (\ref{cosets_D}) of the strong components of
$D$ follows from
(\ref{cosets}).

Note that as ${\bf f_0}(0,0) = {\bf 0}$,  we have ${\bf g}(t)= {\bf f_0}(t,0)$, ${\bf h}(t)= {\bf f_0}(0,t)$,
${\bf g}(0)= {\bf h}(0) = {\bf 0}$,  and
${\bf \tilde{f}_0}(x,y) = {\bf f_0}(x,y) - {\bf g}(y) - {\bf h}(x)$.

Let
$\tilde{\alpha}_1,\dotso,\tilde{\alpha}_d \in  \Range({\bf \tilde{f}_0})$ be a basis for
$W_0$. 
Now, choose $x_i,y_i\in\F_q$ be such that ${\bf \tilde{f}_0}(x_i,y_i) = \tilde{\alpha}_i$, $1\le i\le d$.


Let $(u,{\bf v})$ be a vertex of $D_0$.
We first show that a vertex  $(a, {\bf v} + {\bf y})$  is reachable
from $(u,{\bf v})$ if  ${\bf y}\in {\bf h}(a) - {\bf g}(u)+W_0$.
In order to do this, we write an arbitrary ${\bf y}\in {\bf h}(a) -{\bf g}(u)+ W_0$  as
\[
{\bf y}
=
{\bf h}(a)-{\bf g}(u)
+
(a_1\tilde{\alpha}_1 + \dotsb + a_d\tilde{\alpha}_d),
\]
for some $a_1,\dotso,a_d\in\F_p$,  and consider the following directed walk in $D_0$:
\begin{align}
(u,{\bf v})    &   \to (0,-{\bf v}+{\bf f_0}(u,0)) = (0,-{\bf v}+{\bf g}(u))\nonumber\\
&\to (0,{\bf v} - {\bf g}(u))\label{step11}\\
&\to (x_1,-{\bf v}+{\bf g}(u) + {\bf f_0}(0,x_1)) = (x_1,-{\bf v}+{\bf g}(u) + {\bf h}(x_1))\\
&\to (y_1,{\bf v}-{\bf g}(u)-{\bf h}(x_1)+{\bf f_0}(x_1,y_1))\\
&\to (0,-{\bf v}+{\bf g}(u)+{\bf h}(x_1)-{\bf f_0}(x_1,y_1)+{\bf g}(y_1))\\
& = (0,-{\bf v}+{\bf g}(u)-{\bf \tilde{f}_0}(x_1,y_1)) = (0,-{\bf v}+{\bf g}(u)-\tilde{\alpha}_1)\\
&\to (0,{\bf v}-{\bf g}(u)+\tilde{\alpha}_1))\label{steplast}.
\end{align}
Traveling through vertices whose first coordinates are $0$, $x_1$, $y_1$, $0$, $0$, and $0$ again
(steps \ref{step11}--\ref{steplast})
as many times as needed,
one can reach vertex $(0,{\bf v}-{\bf g}(u) +a_1\tilde{\alpha}_1)$.
Continuing a similar walk through vertices whose first coordinates
are $0$, $x_i$, $y_i$, $0$, $0$, and $0$, $2\le i \le d$,
as many times as needed,
one can reach vertex $(0,{\bf v}-{\bf g}(u) + (a_1\tilde{\alpha}_1 + \ldots + a_i\tilde{\alpha}_i))$,
and so on, until the  vertex
$(0, - {\bf v}+{\bf g}(u)-(a_1\tilde{\alpha}_1+\dotsb+a_d\tilde{\alpha}_d))$ is reached.
The vertex $(a, {\bf v} + {\bf y})$ will be its out-neighbor. Here we indicate just some of the vertices along this
path:
\begin{align*}
&\to \dotso\\
&\to (0,{\bf v}-{\bf g}(u)+ a_1\tilde{\alpha}_1)\\
&\to (x_2,-{\bf v}+{\bf g}(u) - a_1\tilde{\alpha}_1+{\bf h}(x_2))\\
&\to (y_2,{\bf v}-{\bf g}(u)+a_1\tilde{\alpha}_1-{\bf h}(x_2)+{\bf f_0}(x_2,y_2))\\
&\to (0,-{\bf v}+{\bf g}(u)-a_1\tilde{\alpha}_1+{\bf h}(x_2)-{\bf f_0}(x_2,y_2)+{\bf g}(y_2))\\
& = (0,-{\bf v}+{\bf g}(u)-a_1\tilde{\alpha}_1-\tilde{\alpha}_2)\\
&\to (0,{\bf v}-{\bf g}(u)+a_1\tilde{\alpha}_1+\tilde{\alpha}_2)\\
&\to \dotso\\
& = (0, -{\bf v}+{\bf g}(u)- a_1\tilde{\alpha}_1-a_2\tilde{\alpha}_2)\\
&\to \dotso\\
& = (0, - {\bf v}+{\bf g}(u)-(a_1\tilde{\alpha}_1+\dotsb+a_d\tilde{\alpha}_d))\\
&\to (a, {\bf v} - {\bf g}(u) + {\bf h}(a)+(a_1\tilde{\alpha}_1+\dotsb+a_d\tilde{\alpha}_d))\\
&= (a,{\bf v} + {\bf y}).
\end{align*}
Hence,  $(a,{\bf v} + {\bf y})$ is reachable from $(u,{\bf v})$ for any $a\in\F_q$
and any
${\bf y}\in {\bf h}(a)-{\bf g}(u)+ W_0$,
as claimed.
A slight modification of this argument shows that $(a,-{\bf v} + {\bf y})$
is reachable from $(u,{\bf v})$
for any
${\bf y}\in {\bf h}(a)+{\bf g}(u)+ W_0$.

Let us now explain that every vertex of $D_0$ reachable from  $(u,{\bf v})$ is in the set
$$\{ (a, \pm {\bf v} \mp {\bf g}(u) + {\bf h}(a) + W_0)\colon  \; a\in \F_q \}.$$
We will need  the following identities on $\F_q$ and $\F_q^2$, respectively,  which can be checked easily
using  the definition of ${\bf \tilde{f}}$:
\begin{align*}
&{\bf \tilde{f}_0}(t,0)= {\bf g}(t) - {\bf h}(t) = - {\bf \tilde{f}_0}(0,t)\;\; \text{and}\\
&{\bf {f_0}}(x,y) =  {\bf g}(x) + {\bf h}(y) + {\bf {\tilde{f}_0}}(x,y)
- {\bf\tilde{f}_0}(0,y) +   {\bf\tilde{f}_0}(0,x).
\end{align*}
The identities immediately imply that for every $t, x, y \in \F_q$,
\begin{align*}
&{\bf g}(t) - {\bf h}(t) \in W_0\;\; \text{and}\\
&{\bf {f_0}}(x,y) =  {\bf g}(x) + {\bf h}(y) + w\;\;\text{for some}\;\; w=w(x,y)\in W_0.
\end{align*}
Consider a  path with  $k$  arcs, where $k > 0$ and even,  from $(u, {\bf v})$ to $(a,
 {\bf v}+ {\bf y})$:
$$(u, {\bf v}) = (x_0, {\bf v})\to (x_1, \ldots)\to (x_2, \ldots) \to \cdots \to (x_k, {\bf v}+{\bf y})
= (a, {\bf v}+ {\bf y}).$$
Using the definition of an arc  in $D_0$,  and setting ${\bf f_0}(x_i, x_{i+1}) =
{\bf g}(x_i) +{\bf h}(x_{i+1}) +w_i$, and ${\bf g}(x_i) - {\bf h}(x_i)=w_i'$, with all
$w_i, w_i'\in W_0$, we obtain:
\begin{align*}
{\bf y} &= {\bf f_0}(x_{k-1},x_{k}) - {\bf f_0}(x_{k-2},x_{k-1}) + \cdots +
{\bf f_0}(x_{1},x_{2}) - {\bf f_0}(x_{0},x_{1}) \\
&= \sum_{i=0}^{k-1} (-1)^{i+1}{\bf f_0}(x_i, x_{i+1}) =
\sum_{i=0}^{k-1} (-1)^{i+1}({\bf g}(x_i) +{\bf h}(x_{i+1}) +w_i) \\
&= -{\bf g}(x_0) +{\bf h}(x_{k}) + \sum_{i=1}^{k-1} (-1)^{i-1}({\bf g}(x_{i}) - {\bf h} (x_i)) +
\sum_{i=0}^{k-1} (-1)^{i+1}w_i\\
&=   -{\bf g}(x_0) +{\bf h}(x_{k}) + \sum_{i=1}^{k-1} (-1)^{i-1}w_i' +
\sum_{i=0}^{k-1} (-1)^{i+1} w_i.
\end{align*}
Hence, ${\bf y} \in   -{\bf g}(x_0) +{\bf h}(x_{k}) + W_0$.
Similarly,  for any path
$$(u, {\bf v}) = (x_0, {\bf v})\to (x_1, \ldots)\to (x_2, \ldots) \to \cdots \to (x_k, {\bf v}+{\bf y})
= (a, -{\bf v}+ {\bf y}),$$
with  $k$  arcs, where $k$ is odd and at least 1, we obtain ${\bf y} \in   {\bf g}(x_0) +{\bf h}(x_{k}) + W_0$.

The digraph $D_0$ is strong if and only if $W_0= \langle \Range({\bf \tilde{f}_0})\rangle = \F_q^l$ or,
equivalently, $d = el$. Hence part (i) of the theorem is proven for $D_0$ and $q$ odd.
\bigskip

Let $(u,{\bf v})$ be an arbitrary vertex of a strong component of $D$. 
The image of this vertex under the isomorphism $\phi$, defined in (\ref{isomor_phi}), is
$(u,{\bf v}-\frac{1}{2}{\bf f}(0,0))$, which belongs to the strong component 
of $D_0$ whose description is given by (\ref{cosets}) with ${\bf v}$
replaced by ${\bf v}-\frac{1}{2}{\bf f}(0,0)$.
Applying
the inverse of $\phi$ to each vertex of this component of $D_0$ immediately yields the description of
the component of $D$ given by (\ref{cosets_D}).
This establishes the validity of part (i) of Theorem \ref{main_key_theorem_on_D(q;f)} for $q$ odd.


\bigskip

For $q$ even we first apply an argument similar to the one we used above for establishing components of $D_0$ for $q$ odd. As $p=2$, the argument becomes much shorter, and we obtain (\ref{comp_descr_even}).  Then we note that if
\[
(u,{\bf v}) = (x_0,{\bf v})\to (x_1,\dotso)\to (x_2,\dotso) \to\dotsb\to (x_k,{\bf v}+ {\bf y})
\]
is a path in $D$, then $${\bf y} = \sum_{i=0}^{k-1} {\bf f_0}(x_i,x_{i+1}) + \delta \cdot{\bf f}(0,0),$$ where $\delta = 1$ if $k$ is odd,  and  $\delta = 0$ if $k$ is even.
\bigskip

For (ii), we first recall that any two cosets
of $W_0$ in $\F_p^{kl}$ are disjoint or coincide. It is clear that for $q$ odd,  the cosets (\ref{cosets})
 coincide if and only if ${\bf v}\in {\bf g}(u) + W_0$. The vertex set of this strong component is
$\{(a,{\bf h}(a)+ W_0)\colon a\in\F_q \}$, which
shows that this is the unique component of such type.  As  $|W_0|= p^d$,
 the component contains $q\cdot p^d= p^{e+d}$  vertices. In all other cases the cosets are disjoint, and their union  is of order $2q p^d= 2p^{e+d}$.
 Therefore the number of strong components  of $D_0$, which is isomorphic to $D$, is
\[
\frac{|V(D)| - p^{e+d}}{2p^{e+d}} +1= \frac{p^{e(l+1)}-p^{e+d}}{2p^{e+d}}+1 = \frac{p^{el-d}+1}{2}.
\]
For $q$ even, our count follows the same ideas as for $q$ odd, and the formulas giving the number of strongly connected components and the order of each component follow from 
(\ref{comp_descr_even}).
\bigskip

For the isomorphism of strong components of the same order, let $q$ be odd, and
let $D_1$ and $D_2$ be two distinct strong components of $D_0$ each of order $2p^{e+d}$. Then
there exist
$(u_1,{\bf v}_1),(u_2,{\bf v}_2)\in V(D_0)$ with
${\bf v}_1\not\in {\bf g}(u_1)+W_0$
and
${\bf v}_2\not\in {\bf g}(u_2) + W_0$ such that
$V(D_1)=\{(a,{\bf v}_1 + {\bf h}(a) - {\bf g}(u_1) + W_0)\colon \, a\in \F_q\}$
and
$V(D_2)=\{(a,{\bf v}_2 + {\bf h}(a) - {\bf g}(u_2) + W_0)\colon \, a\in \F_q\}$.

Consider a map $\psi: V(D_1)\to V(D_2)$ defined by
\[
(a,\pm{\bf v}_1 + {\bf h}(a)\mp{\bf g}(u_1) + {\bf y})
\mapsto
(a,\pm{\bf v}_2 + {\bf h}(a)\mp{\bf g}(u_2) + {\bf y}),
\]
for any $a\in\F_q$ and any ${\bf y}\in W_0$.
Clearly, $\psi$ is a bijection. Consider an arc $(\alpha, \beta)$ in $D_1$.
If $\alpha=(a,{\bf v}_1+{\bf h}(a)-{\bf g}(u_1)+{\bf y})$, then
$\beta=(b,-{\bf v}_1-{\bf h}(a)+{\bf g}(u_1)-{\bf y} + {\bf f_0}(a,b))$ for some
$b\in\F_q$. Let us
check that $(\psi (\alpha), \psi(\beta))$ is an arc in $D_2$. In order to find an expression for
the second coordinate of $\psi (\beta)$,  we first rewrite the second coordinate of $\beta$ as
$-{\bf v}_1+{\bf h}(a)+{\bf g}(u_1)+{\bf y'}$,  where  ${\bf y'} \in W_0$.  In order to do this, we use
the definition of
${\bf \tilde{f}_0}$ and the obvious equality  ${\bf g} (b) - {\bf h} (b) =
{\bf \tilde{f}_0}(b,0)\in W_0$.
So we have:
\begin{align*}
&-{\bf v}_1-{\bf h}(a)+{\bf g}(u_1)-{\bf y}+{\bf f}(a,b)\\
=&-{\bf v}_1-{\bf h}(a)+{\bf g}(u_1)-{\bf y}+{\bf \tilde{f}_0}(a,b)+{\bf g}(b)+{\bf h}(a)\\
=&-{\bf v}_1+{\bf h}(b)+{\bf g}(u_1)+ ({\bf g}(b)-{\bf h}(b))-{\bf y}+{\bf \tilde{f}_0}(a,b)\\
=&-{\bf v}_1+{\bf h}(b)+{\bf g}(u_1)+{\bf y'},
\end{align*}
where ${\bf y'}=  ({\bf g}(b)-{\bf h}(b))-{\bf y}+{\bf \tilde{f}_0}(a,b) \in W_0$.
Now it is clear that $\psi (\alpha) = (a,{\bf v}_2+{\bf h}(a)-{\bf g}(u_2)+{\bf y})$ and
$\psi (\beta) =
(b,-{\bf v}_2+{\bf h}(b)+{\bf g}(u_2)+ {\bf y'})$
are the tail and the head of an
arc in $D_2$.
Hence $\psi$ is an isomorphism
of digraphs $D_1$ and $D_2$.

An argument  for the isomorphism of  all strong components for $q$ even is absolutely
similar. This ends the proof of the theorem.
\end{proof}

\bigskip

We illustrate Theorem \ref{main_key_theorem_on_D(q;f)} by the following example.

\begin{example}
	Let $p\ge 3$ be prime, $q = p^2$, and $\F_q \cong \F_p(\xi)$, where $\xi$ is a primitive
	element in $\F_q$.
	Let us define $\emph{}f\colon\F_q^2\to\F_q$ by the following table:
\bigskip

	\begin{center}
		\begin{tabular}{|c|ccc|}
			\hline
			\diagbox{$y$}{$x$}    &    0        &    \;1    &    $x\neq 0,1$    \\
			\hline
			0          &    0        &    \;$\xi$                         &    1\\
			1   &    $\xi$                         &   \; $2\xi$    &    $\xi$\\
			$y\neq 0,1$                        &    2                     &   \; $\xi$     &    0\\
			\hline
		\end{tabular}
		.
	\end{center}
	As $1$ and $\xi$ are values of $f$,  $\langle \Range(f) \rangle = \F_q^2$.
 Nevertheless,  $D(q; f)$ is not strong as we show below.
\bigskip

	In this example, since $l=1$, the function ${\bf f} = f$. Since $f(0,0) = 0$, $f_0=f$, and
\[
	{\bf g}(t)=
	g(t) = f(t,0)=
	\begin{cases}
	0,    & t = 0,\\
	\xi,  & t = 1,\\
	1,    & \mbox{otherwise}
	\end{cases}
	,\quad
	{\bf h}(t)=
	h(t) = f(0,t)=
	\begin{cases}
	0,    & t = 0,\\
	\xi,  & t = 1,\\
	2,    & \mbox{otherwise}
	\end{cases}
	.
	\]

The function ${\bf \tilde{f}_0}(x,y) = \tilde{f} (x,y) =
f(x,y) - {f}(y,0) -  f(0,x)$
	can be represented by the table
\bigskip

	\begin{center}
		\begin{tabular}{|c|ccc|}
			\hline
			\diagbox{$y$}{$x$}    &    0        &    $ \; 1 $    &    $x\neq 0,1$   \\
			\hline
			0                        &    0  &   \; 0   &  -1\\
			1   &    0  &   \; 0   &  -2\\
			$y\neq 0,1$                        &    1  &   \; -1   &  -3\\
			\hline
		\end{tabular}
		,
	\end{center}	
	\bigskip
and so $\langle \Range(\tilde{f}_0) \rangle = \F_p \neq  \langle \Range({f}) \rangle=\F_{p^2}$.

As $l=1$, $e=2$, and $d=1$,
		$D(q;f)$ has $(p^{le-d}+1)/2 = (p+1)/2$ strong components. For $p=5$, there are three of them.   If  $\F_{25} = \F_5 [\xi]$,  where $\xi$ is a root of
	 $X^2+4X+2\in\F_5[X]$,    these components  can be  presented as:
	\[
	\{
	(a,h(a)+\F_5)\colon a\in\F_{25}
	\}
	,
	\]
	\[
	\{
	(a,h(a)-\xi+\F_5)
	\colon
	a\in\F_{25}
	\}
	\cup
	\{
	(b,h(b)+\xi+\F_5)
	\colon
	b\in\F_{25}
	\}
	,
	\]
	\[
	\{
	(a,h(a)+2\xi+\F_5)
	\colon
	a\in\F_{25}
	\}
	\cup
	\{
	(b,h(b)-2\xi+\F_5)
	\colon
	b\in\F_{25}
	\}
	.
	\]
\end{example}

\section{Connectivity of $D(q, m,n)$}\label{sect3}

The goal of this section is to prove Theorem \ref{monomial_strong}.

For any $t\ge 2$ and integers $a_1, \ldots, a_t$, not all zero, let $(a_1,\ldots, a_t)$
(respectively $[a_1,\ldots, a_t]$) denote
the greatest common divisor (respectively, the least common multiple) of these numbers.
Moreover, for an integer $a$, let  $\overline{a} = (q-1, a)$.
Let $<\xi>= \F_q^*$, i.e., $\xi$ is a generator of the cyclic group $\F_q^*$.
(Note the difference between $< \cdot >$ and $\langle \cdot \rangle$ in our notation.)
Suppose   $A_k= \{x^k: x\in \F_q^*\}$, $k\ge 1$.  It is well known (and easy to show) that
$A_k = < \xi^{\overline k} > $ and $|A_k|=(q-1)/{\overline k}$.

We recall that for each positive divisor $e_i$ of $e$,   $q_i = (q-1)/(p^{e_i} - 1)$.

\begin{lemma}
\label{smallest_field_Feps}
Let $q_s$ be the largest of the $q_i$ dividing $\overline k$. Then $\F_{p^{e_s}}$ is
the smallest subfield of $\F_q$ in which $A_k$ is contained. Moreover,
 $\langle A_k \rangle = \F_{p^{e_s}}$.
\end{lemma}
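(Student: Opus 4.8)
The plan is to reduce both assertions to two facts: a divisibility criterion describing when the multiplicative group $A_k$ lies inside a given subfield, and the observation that the $\F_p$-span of a multiplicative group is automatically a subfield.

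First I would record the containment criterion. For each divisor $e_i$ of $e$ the multiplicative group of $\F_{p^{e_i}}$ is the unique subgroup of $\F_q^*$ of order $p^{e_i}-1$, namely $<\xi^{q_i}>$ (since $q_i\mid q-1$, this group has order $(q-1)/q_i = p^{e_i}-1$). As $A_k = <\xi^{\overline k}>$ and both are cyclic subgroups of the cyclic group $\F_q^*$, and since $0\notin A_k$, the containment $A_k\subseteq \F_{p^{e_i}}$ holds precisely when $<\xi^{\overline k}>\subseteq <\xi^{q_i}>$, i.e. precisely when $q_i\mid \overline k$ (using that $q_i$ and $\overline k$ are divisors of $q-1$, so containment of the two cyclic subgroups is equivalent to $q_i \mid \overline k$).

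Next I would show that $\langle A_k\rangle$ is a subfield of $\F_q$, and in fact the smallest subfield containing $A_k$. The key point is that $1 = 1^k\in A_k$ and that $A_k$, being a group, is closed under multiplication; hence a product of two $\F_p$-linear combinations of elements of $A_k$ is again such a combination, so $\langle A_k\rangle$ is closed under multiplication. Being a finite, unital, commutative subring of the field $\F_q$ with no zero divisors, $\langle A_k\rangle$ is therefore a subfield. Since any subfield containing $A_k$ is an $\F_p$-subspace containing $A_k$, it contains $\langle A_k\rangle$; thus $\langle A_k\rangle$ is the \emph{smallest} subfield of $\F_q$ containing $A_k$. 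This settles the ``Moreover'' clause once the smallest subfield is identified.

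Finally I would identify $\langle A_k\rangle = \F_{p^{e^*}}$ with $\F_{p^{e_s}}$. Recall that subfields of $\F_q=\F_{p^e}$ are exactly the $\F_{p^{e_i}}$ with $e_i\mid e$, that $\F_{p^a}\subseteq\F_{p^b}\iff a\mid b$, and that $e_i\mapsto q_i$ is strictly decreasing. Since $\F_{p^{e^*}}$ is the smallest subfield containing $A_k$, we have $e^*\mid e_i$, hence $q_{e^*}\ge q_i$, for every $e_i$ with $A_k\subseteq\F_{p^{e_i}}$. By the criterion of the first step these are exactly the $e_i$ with $q_i\mid\overline k$, and taking $e_i=e^*$ gives $q_{e^*}\mid\overline k$. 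Thus $q_{e^*}$ divides $\overline k$ and is $\ge$ every $q_i$ dividing $\overline k$, so by the definition of $q_s$ we conclude $q_{e^*}=q_s$, whence $e^*=e_s$. Therefore $\F_{p^{e_s}}$ is the smallest subfield containing $A_k$ and $\langle A_k\rangle=\F_{p^{e_s}}$.

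I expect the main obstacle to be minimality, namely that $\F_{p^{e_s}}$ sits inside \emph{every} subfield containing $A_k$, rather than the mere containment $A_k\subseteq\F_{p^{e_s}}$. Maximality of $q_s$ among the divisors of $\overline k$ of the form $q_i$ does not by itself force a divisibility relation between two such $q_i$'s, so a direct number-theoretic argument stalls. What rescues the argument is that the $\F_p$-span $\langle A_k\rangle$ is automatically a field: this pins down the smallest subfield as $\langle A_k\rangle$, and the strictly decreasing correspondence $e_i\mapsto q_i$ then converts ``smallest subfield'' into ``largest $q_i$ dividing $\overline k$,'' which is exactly $q_s$.
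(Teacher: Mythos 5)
Your proof is correct, and while it shares the paper's overall skeleton (reduce containment of $A_k$ in subfields to the divisibility condition $q_i\mid\overline{k}$), its engine is genuinely different. The paper argues the two directions by separate mechanisms: containment $A_k\subseteq\F_{p^{e_s}}$ is proved by an explicit norm computation (writing $k=tq_s$ and $x^k=\bigl(x^{(p^e-1)/(p^{e_s}-1)}\bigr)^t$, a power of the norm of $x$ over $\F_{p^{e_s}}$), and non-containment in any $\F_{p^{e_i}}$ with $e_i<e_s$ is proved via Lagrange's theorem: $|A_k|=(q-1)/\overline{k}$ would divide $p^{e_i}-1$, forcing $q_i\mid\overline{k}$ and contradicting the maximality of $q_s$. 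You instead extract a single two-sided criterion from the subgroup lattice of the cyclic group $\F_q^*$: since $\F_{p^{e_i}}^*=\,<\xi^{q_i}>$ and $A_k=\,<\xi^{\overline{k}}>$, one has $A_k\subseteq\F_{p^{e_i}}$ if and only if $q_i\mid\overline{k}$. This is more uniform and dispenses with the norm map entirely. You also make explicit two points the paper leaves implicit: first, that $\langle A_k\rangle$, being an $\F_p$-subspace containing $1$ and closed under multiplication, is itself a subfield and hence \emph{equals} the smallest subfield containing $A_k$ (the paper asserts that $\langle A_k\rangle$ is a subfield without justification, and this fact is genuinely needed, since a proper $\F_p$-subspace of $\F_{p^{e_s}}$ containing $A_k$ need not a priori be a subfield); second, the conversion of ``smallest subfield'' into ``largest $q_i$ dividing $\overline{k}$'' via the strictly decreasing correspondence $e_i\mapsto q_i$, where the paper instead argues by contradiction only against subfields with $e_i<e_s$ (which does suffice, because the minimal subfield containing $A_k$ sits inside $\F_{p^{e_s}}$ and is therefore of that form, though the paper does not say so). In short: the paper's proof is shorter and more computational; yours is longer but self-contained, handles both directions at once, and fills the span-is-a-field gap.
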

\begin{proof}
By definition of ${\overline k}$, $q_s$ divides $k$, so $k = tq_s$ for some integer $t$. Thus
for any $x\in\F_q$,
\[
x^k = x^{tq_s} = \Bigl(x^\frac{p^e-1}{p^{e_s}-1}\Bigr)^t \in \F_{p^{e_s}},
\]
as  $x^{(p^e-1)/(p^{e_s}-1)}$  is the norm of $x$ over $\F_{p^{e_s}}$ and hence is in $\F_{p^{e_s}}$.
  Suppose now
that $A_k \subseteq \F_{p^{e_i}}$, where $e_i < e_s$.
Since $A_k$ is a subgroup of $\F_{p^{e_i}}^*$, we have that $|A_k|$ divides $|\F_{p^{e_i}}^*|$,
that is,
$(q-1)/{\overline k}$  divides $p^{e_i} - 1$.
Then ${\overline k} = r \cdot (q - 1)/(p^{e_i} - 1) = rq_i$  for
some integer $r$. Hence,  $q_i$ divides ${\overline k}$,
and a contradiction is obtained as $q_i > q_s$. This proves that
$\langle A_k \rangle$ is a subfield of $\F_{p^{e_s}}$ not contained in any smaller
subfield of $\F_q$. Thus $\langle A_k \rangle = \F_{p^{e_s}}$.
%
\end{proof}
\vspace{0.25cm}

Let $A_{m,n}=
\{ x^m y^n\colon  x,y\in\F_q^*\}$, $m,n\ge 1$. Then, obviously,  $A_{m,n}$ is a subgroup of $\F_q^*$,
and $A_{m,n} = A_m A_n$ -- the product of subgroups $A_m$ and  $A_n$.
\begin{lemma}
\label{lemma_size_Amn}
Let $d= (q-1,m,n)$.  Then $A_{m,n} = A_d$.
\end{lemma}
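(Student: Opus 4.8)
The plan is to prove $A_{m,n} = A_d$ where $d = (q-1, m, n)$, using the structural facts already established about cyclic subgroups of $\F_q^*$. Since $A_{m,n}$ is a subgroup of the cyclic group $\F_q^* = \langle \xi \rangle$, and every subgroup of a cyclic group is itself cyclic and uniquely determined by its order, it suffices to show that $A_{m,n}$ and $A_d$ have the same order, or equivalently that they are generated by the same power of $\xi$. Recall from the excerpt that $A_k = \langle \xi^{\overline{k}} \rangle$ with $|A_k| = (q-1)/\overline{k}$, where $\overline{k} = (q-1, k)$.

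First I would compute the subgroup generated by the product $A_{m,n} = A_m A_n$. Since $A_m = \langle \xi^{\overline{m}} \rangle$ and $A_n = \langle \xi^{\overline{n}} \rangle$, the product $A_m A_n$ is the smallest subgroup of $\F_q^*$ containing both $\xi^{\overline{m}}$ and $\xi^{\overline{n}}$. In a cyclic group, the subgroup generated by $\xi^{a}$ and $\xi^{b}$ is $\langle \xi^{(a,b)} \rangle$. Hence $A_{m,n} = \langle \xi^{(\overline{m}, \overline{n})} \rangle$, and its order is $(q-1)/(\overline{m}, \overline{n})$.

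The heart of the argument is then the purely number-theoretic identity
\[
(\overline{m}, \overline{n}) = \bigl( (q-1, m),\ (q-1, n) \bigr) = (q-1, m, n) = d.
\]
This follows because $((q-1,m),(q-1,n))$ divides $q-1$, $m$, and $n$, so it divides $d$; conversely $d$ divides each of $(q-1,m)$ and $(q-1,n)$, so it divides their gcd. With this identity in hand, $A_{m,n} = \langle \xi^{d} \rangle$. Finally, since $d \mid (q-1)$ we have $\overline{d} = (q-1, d) = d$, so by the displayed fact $A_d = \langle \xi^{\overline{d}} \rangle = \langle \xi^{d} \rangle$, giving $A_{m,n} = A_d$ as claimed.

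I do not expect any genuine obstacle here; the only point requiring slight care is justifying that the product of two subgroups $\langle \xi^a \rangle$ and $\langle \xi^b \rangle$ of a cyclic group equals $\langle \xi^{(a,b)} \rangle$, which I would verify by a short direct argument: the product contains $\xi^a$ and $\xi^b$, hence contains $\xi^{(a,b)}$ by Bézout, and conversely every element $\xi^{ia + jb}$ is a power of $\xi^{(a,b)}$. Everything else is routine gcd manipulation, so the proof should be short.
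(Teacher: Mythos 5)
Your proof is correct, and it takes a genuinely different route from the paper's. The paper never identifies a generator of $A_{m,n}$; instead it computes $|A_{m,n}|$ from the product formula $|A_mA_n|=|A_m|\,|A_n|/|A_m\cap A_n|$, using the intersection formula $A_m\cap A_n = <\xi^{[\overline{m},\,\overline{n}]}>$, and then reduces the resulting expression by a fairly long gcd/lcm manipulation (writing $\overline{m}=dm'$, $\overline{n}=dn'$ with $(m',n')=1$, deducing $q-1=dm'n't$ and $\overline{[\overline{m},\,\overline{n}]}=dm'n'$) to conclude $|A_{m,n}|=(q-1)/d=|A_d|$; equality of the two subgroups then follows because subgroups of a cyclic group are determined by their order. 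You instead identify $A_{m,n}=A_mA_n$ directly by a generator: it is the join of $<\xi^{\overline{m}}>$ and $<\xi^{\overline{n}}>$, which equals $<\xi^{(\overline{m},\overline{n})}>$ by B\'ezout, and then you need only the one-line identity $((q-1,m),(q-1,n))=(q-1,m,n)=d$, giving $A_{m,n}=<\xi^{d}>=<\xi^{\overline{d}}>=A_d$. Your key fact (the product of two cyclic subgroups corresponds to the gcd of the exponents) is exactly dual to the paper's quoted fact (the intersection corresponds to the lcm), and both are equally standard; but your route bypasses the order-counting formula and all of the lcm arithmetic, so it is shorter and arguably cleaner. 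The one step you must not gloss over is the join formula itself, and the B\'ezout argument you sketch for it is the right justification.
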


\begin{proof}
As $A_m$ and $A_n$ are subgroups of $\F_q^*$, we have
\begin{equation}
\label{Amn_order}
|A_{m,n} | = | A_m A_n | = \frac{| A_m| | A_n |}{|A_m \cap A_n|}.
\end{equation}

It is well known (and easy to show) that
if $x$ is a generator of a
cyclic group, then for any integers $a$ and $b$,
$< x^a >\cap < x^b > =
<x^{[a,b]}> $. Therefore,
$A_m \cap A_n =  \, < \xi^{[\overline{m},\, {\overline n}]} >$ and
 $|A_m \cap A_n| =  (q-1)/\overline{[\overline{m},\, {\overline n}]}$.

We wish to show that $|A_{m,n}| = |A_d|$, and since in a cyclic
group any two subgroups of equal order are equal, that would imply $A_{m,n}=A_d$.

From (\ref{Amn_order}) we find
\begin{equation}\label{Amn}
|A_{m,n}|=  \frac{ (q-1)/{\overline m}\, \cdot (q-1)/{\overline n} }
{(q-1)/\overline{[\overline{m},\, {\overline n}]}}=
\frac{(q-1) \cdot \overline{[\overline{m},\, {\overline n}]} }  {{\overline m} \cdot  {\overline n}}.
\end{equation}
We wish to  simplify the last fraction  in (\ref{Amn}).
Let $M$ and $N$ be such that $q-1 = M\overline{m} = N\overline{n}$. As
$d = (q-1, m,n) = (\overline{m},\overline{n})$, we have
$\overline{m} = dm'$ and $\overline{n} = dn'$ for some co-prime integers $m'$ and $n'$.
 Then $q-1 = dm'M = dn'N$ and $(q-1)/d = m'M = n'N$. As
$(m',n')=1$, we have $M = n't$ and $N = m't$ for some integer $t$. This implies that
$q-1 = dm'n't$. For any integers $a$ and $b$, both nonzero, it holds that $[a,b]= ab/(a,b)$.
Therefore, we have
\[
[\overline{m},\overline{n}] = [dm',dn']
 =
\frac{dm'dn'}{(dm',dn')} = \frac{dm'dn'}{d(m',n')} = dm'n' .
\]
Hence,
 $\overline{[\overline{m},\, {\overline n}]}  =
 (q-1,[\overline{m},\overline{n}]) = (dm'n't, dm'n') = dm'n'$,  and
\[
|A_{m,n}| = \frac{ (q-1) \cdot dm'n'}{\overline{m} \cdot \overline{n}} =
\frac{ (q-1) \cdot dm'n'}{dm' \cdot dn'} =\frac{q-1}{d}.
\]

Since $\overline{d} =(q-1,d) = d$ and $|A_d|= (q-1)/\overline{d}$,  we have
$|A_{m,n} |= |A_d|$ and so $A_{m,n} = A_d$.
\end{proof}

We are ready to prove Theorem \ref{monomial_strong}.
\begin{proof}
For $D=D(q;m,n)$, we have
$$\langle \Range{({\bf \tilde{f}_0})}\rangle = \langle \Range(f) \rangle =
\langle \Range ({x^my^n}) \rangle =
\langle A_{m,n} \rangle = \langle A_d \rangle = \F_{p^{e_s}},$$ where the last two equalities are due
to Lemma \ref{lemma_size_Amn} and Lemma \ref{smallest_field_Feps}.

Part (i) follows immediately from applying Theorem \ref{main_key_theorem_on_D(q;f)}
with $W = \F_{p^{e_s}}$, ${\bf g} = {\bf h} = 0$.
Also, $D$ is strong if and only if $\F_{p^{e_s}} = \F_q$, that is, if and only if
$e_s = e$, which is equivalent to $q_s = 1$.

The other statements of Theorem \ref{monomial_strong} follow directly from the corresponding
parts of Theorem \ref{main_key_theorem_on_D(q;f)}.
\end{proof}

\section{Open problems}\label{sec4}

We would like to conclude this paper with two suggestions for further investigation.
\bigskip

\noindent{\bf Problem 1.}  Suppose the digraphs $D(q; {\bf f})$ and $D(q; m,n)$ are strong.
What are their diameters?
\bigskip

\noindent{\bf Problem 2.}  Study the connectivity of graphs $D(\F; {\bf f})$, where
${\bf f}\colon \F^2\to \F^l$,  and $\F$ is a finite  extension of
      the field $\mathbb{Q}$ of rational numbers.
 \bigskip

\section{Acknowledgement}    The authors are thankful to the anonymous referees
whose thoughtful comments improved the paper;
to Jason Williford for pointing to a mistake in
the original version of Theorem \ref{main_key_theorem_on_D(q;f)}; and to
William Kinnersley for carefully reading the paper and pointing to a number of small errors.

\end{document}